\def\hmath$#1${\texorpdfstring{{\rmfamily\textit{#1}}}{#1}}
\def\ge{\geqslant}
\def\le{\leqslant}
\def\a{\alpha}
\def\b{\beta}
\def\G{\Gamma}
\def\d{\delta}
\def\s{\sigma}
\def\t{\tau}
\def\k{\kappa}
\def\l{\lambda}
\def\i{^{-1}}
\def\<{\langle}
\def\>{\rangle}
\newcommand{\fka}{\ensuremath{\mathfrak{a}}\xspace}
\newcommand{\bG}{\mathbf G}
\newcommand{{\BG}}{\ensuremath{\mathbb {G}}\xspace}
\newcommand{{\BK}}{\ensuremath{\mathbb {K}}\xspace}
\newcommand{\BR}{\ensuremath{\mathbb {R}}\xspace}
\newcommand{\BS}{\ensuremath{\mathbb {S}}\xspace}
\newcommand{\BZ}{\ensuremath{\mathbb {Z}}\xspace}
\newcommand{\CA}{\ensuremath{\mathcal {A}}\xspace}
\newcommand{\Ad}{{\mathrm{Ad}}}
\newcommand{\ad}{{\mathrm{ad}}}
\DeclareMathOperator{\Gal}{Gal}
\def\tW{\tilde W}
\def\wtt{{\rm wt}}
\newtheorem{theorem}{Theorem}
\newtheorem{proposition}[theorem]{Proposition}
\newtheorem{lemma}[theorem]{Lemma}
\newtheorem{corollary}[theorem]{Corollary}
\theoremstyle{definition}
\newtheorem{remark}[theorem]{Remark}
\numberwithin{equation}{section}
\numberwithin{theorem}{section}
\renewcommand{\to}{%
   \ifbool{@display}{\longrightarrow}{\rightarrow}%
   }
\let\shortmapsto\mapsto
\renewcommand{\mapsto}{%
   \ifbool{@display}{\longmapsto}{\shortmapsto}%
   }
\newlength{\olen}
\newlength{\ulen}
\newlength{\xlen}
\newcommand{\xra}[2][]{%
   \ifbool{@display}%
      {\settowidth{\olen}{$\overset{#2}{\longrightarrow}$}%
       \settowidth{\ulen}{$\underset{#1}{\longrightarrow}$}%
       \settowidth{\xlen}{$\xrightarrow[#1]{#2}$}%
       \ifdimgreater{\olen}{\xlen}%
          {\underset{#1}{\overset{#2}{\longrightarrow}}}%
          {\ifdimgreater{\ulen}{\xlen}%
             {\underset{#1}{\overset{#2}{\longrightarrow}}}
             {\xrightarrow[#1]{#2}}}}%
      {\xrightarrow[#1]{#2}}
   }
\newcommand{\xyra}[2][]{%
   \settowidth{\xlen}{$\xrightarrow[#1]{#2}$}%
   \ifbool{@display}%
      {\settowidth{\olen}{$\overset{#2}{\longrightarrow}$}%
       \settowidth{\ulen}{$\underset{#1}{\longrightarrow}$}%
       \ifdimgreater{\olen}{\xlen}%
          {\mathrel{\xymatrix@M=.12ex@C=3.2ex{\ar[r]^-{#2}_-{#1} &}}}%
          {\ifdimgreater{\ulen}{\xlen}%
             {\mathrel{\xymatrix@M=.12ex@C=3.2ex{\ar[r]^-{#2}_-{#1} &}}}
             {\mathrel{\xymatrix@M=.12ex@C=\the\xlen{\ar[r]^-{#2}_-{#1} &}}}}}%
      {\mathrel{\xymatrix@M=.12ex@C=\the\xlen{\ar[r]^-{#2}_-{#1} &}}}%
   }
\newcommand{\xla}[2][]{%
   \ifbool{@display}%
      {\settowidth{\olen}{$\overset{#2}{\longleftarrow}$}%
       \settowidth{\ulen}{$\underset{#1}{\longleftarrow}$}%
       \settowidth{\xlen}{$\xleftarrow[#1]{#2}$}%
       \ifdimgreater{\olen}{\xlen}%
          {\underset{#1}{\overset{#2}{\longleftarrow}}}%
          {\ifdimgreater{\ulen}{\xlen}%
             {\underset{#1}{\overset{#2}{\longleftarrow}}}
             {\xleftarrow[#1]{#2}}}}%
      {\xleftarrow[#1]{#2}}
   }
\newcommand{\isoarrow}{%
   \ifbool{@display}{\overset{\sim}{\longrightarrow}}{\xrightarrow\sim}%
   }
\begin{document}

\title[]{Demazure product of the affine Weyl groups}
\author{Xuhua He}
\address{X.~H., The Institute of Mathematical Sciences and Department of Mathematics, The Chinese University of Hong Kong, Shatin, N.T., Hong Kong SAR, China}
\email{xuhuahe@math.cuhk.edu.hk}
\author{Sian Nie}
\address{S.~N., University of Chinese Academy of Sciences \&  Academy of Mathematics and Systems Science, CAS, 100190, Beijing, China}
\email{niesian@amss.ac.cn}

\keywords{Affine Weyl groups, Demazure product, quantum Bruhat graph}
\subjclass[2010]{20F55, 20G25}


\begin{abstract}
The Demazure product gives a natural monoid structure on any Coxeter group. Such structure occurs naturally in many different areas in Lie Theory. This paper studies the Demazure product of an extended affine Weyl group $\tW$. The main discovery is a close connection between the Demazure product of an extended affine Weyl group and the quantum Bruhat graph of the finite Weyl group. As applications, we obtain explicit formulas on the generic Newton points and the Demazure products of elements in the lowest two-sided cell/shrunken Weyl chambers of $\tW$, and obtain an explicit formula on the Lusztig-Vogan map from the coweight lattice to the set of dominant coweights.
\end{abstract}

\maketitle

\section*{Introduction}
An extended affine Weyl group $\tW$ parameterizes the affine Schubert varieties in an affine flag variety. It plays an important role in the study of affine Hecke algebras and $p$-adic groups. The group $\tW$ has two descriptions: 
\begin{enumerate}
    \item it is a semidirect product of the coweight lattice with a finite Weyl group;
    
    \item it is a quasi-Coxeter group, i.e., $\tW=W_a \rtimes \Omega$, where $W_a$ is a Coxeter group and $\Omega$ is a group of length-preserving automorphisms on $W_a$. 
\end{enumerate}

The multiplication on $\tW$ can be described easily via Description (1). Description (1) is also very useful in the study of the center of $\tW$ and its affine Hecke algebras. 

Description (2) allows us to define the length function and Bruhat order on $\tW$. It is essential in the definition of affine Hecke algebras and is important in the study of the convolution product on the affine flag varieties. 

Moreover, the quasi-Coxeter structure allows us to define the Demazure product $\ast$ on $\tW$. The Demazure product gives a monoid structure on $\tW$, which is useful in the study of affine $0$-Hecke algebras and in Lusztig's theory of total positivity. 

It is a natural question to understand the Demazure product $\ast$ via description (1) of $\tW$. As we will see later in the introduction, the answer to this question has several interesting applications in Lie theory. 

Note that any element in $\tW$ is of the form $t^\l x$ for $\l \in X_*$ and $x \in W_0$. If $\l, \l'$ are both dominant, then $t^\l \ast t^{\l'}=t^{\l+\l'}$. However, for arbitrary coweights $\l$ and $\l'$, $t^\l \ast t^{\l'}$ does not equal $t^{\l+\l'}$. Some correction term is needed. 

The main result of this paper is that the translation part of the correction term is given by the quantum Bruhat graph introduced by Brenti, Fomin, and Postnikov in 1999. See Theorem \ref{main} for a precise statement. 

First, let us explain why the quantum Bruhat graph and the Demazure product are related. Lam and Shimozono discovered in \cite{LS} that the Bruhat orders of the elements in $\tW$ with very regular translation part could be described via the quantum Bruhat graph. Note that $w \ast w'$, by definition, is the maximal element in the set $\{x' y'; x' \le x, y' \le y\}$. It indicates a possible connection between the Demazure product and the quantum Bruhat graph. New evidences come from the recent study of generic Newton points. 

The generic Newton point of an element $w \in \tW$, roughly speaking, is the maximal Frobenius-twisted conjugacy class in the loop group that intersects the Bruhat cell corresponding to $w$. Milic\'{e}vi\'{c} in \cite{M} established a formula of the generic Newton point of elements in $\tW$ with ``superregular'' translation part via the quantum Bruhat graph. Finally, the first author in a very recent work \cite{He-Demazure} gave a formula of the generic Newton point for any element in $w$ in terms of the Demazure powers of $w$.

In the proof of Theorem \ref{main}, we do not use the explicit formula of in \cite{He-Demazure}, but the idea of using Demazure product to calculate the generic Newton point. We then compare it with the formula in \cite{M}. It is worth pointing out that although the proof of Theorem \ref{main} here is only half a page, it is not an easy result. Our proof uses as a black box the main result of \cite{M}, the proof of which involves a long and technical analysis on the paths in the quantum Bruhat graph encoding saturated chains in the strong Bruhat order on the affine Weyl group. 

Note that the Demazure product of $w$ with a translation element ``in the same direction'' as $w$ is easy to calculate. Thus one may artificially add a suitable superregular translation to $w$ (so that one may use \cite{M}), and then subtract this translation element in the end. This is how we bypass the ``superregularity'' assumption of \cite{M} in Theorem \ref{main}. The proof of Theorem \ref{main} is given in \S\ref{sec:2}. 

Now we discuss some applications of our main result. 

In \cite{M}, the formula for the generic Newton point of $w$ is given under two assumptions: the group is split, and the translation part of the element $w$ is superregular. Using the Demazure product, both assumptions can be much weakened. In proposition \ref{prop:newton}, we obtain the formula for any quasi-split adjoint groups and for any $w$ in the lowest two-sided cell (i.e. the shrunken Weyl chambers) of $\tW$. Note that the quasi-split adjoint assumption is not essential. In \S\ref{sec:3.2}, we explain how the generic Newton points for arbitrary reductive groups can be deduced from quasi-split adjoint groups. 

Note that our main result only determines the translation part of the Demazure product. However, for elements in the lowest two-sided cell (i.e. the shrunken Weyl chambers) of $\tW$, we have an explicit formula for the whole Demazure product. This is proposition \ref{prop:ww}. 

Finally, the Demazure product on $\tW$ induces a map from the pairs of coweights to the dominant coweights (see \S\ref{sec:3.4}). We give an explicit formula for this map in proposition \ref{prop:ast-l}. As a consequence, we obtain in corollary \ref{cor:LV} an explicit formula for the map \cite{LV} from coweights to dominant coweights. This answers a question of Lusztig and Vogan. 

\subsection*{Acknowledgments:} X.H.~is partially supported by the Hong Kong RGC grant 14300021. We thank Felix Schremmer for his comments on a previous version of this work. 

\section{Preliminary}

\subsection{Extended affine Weyl groups}\label{root}
Let $\mathfrak R=(X^*, R, X_*, R^\vee, \Pi)$ be a based reduced root datum, where $X^*$ is the weight lattice, $X_*$ is the coweight lattice, $R \subseteq X^*$ is the set of roots, $R^\vee \subseteq X_*$ is the set of coroots and $\Pi \subseteq R$ is the set of simple roots. Let $V=X_* \otimes \BR$. For any $\a \in R$, we have a reflection $s_\a$ on $V$ sending $v$ to $v-\langle \a, v\rangle \a^\vee$, where $\langle ~ , ~ \rangle : X^* \times X_* \rightarrow \BZ$ is the natural perfect pairing between $X^*$ and $X_*$. The reflections $s_\a$ generate the {\it finite Weyl group} $W_0=W_0(\mathfrak R)$ of $\mathfrak R$. Let $\BS_0=\{s_\a; \a \in \Pi\}$ be the set of simple reflections. Then $(W_0, \BS_0)$ is a Coxeter system. Set \begin{gather*}
W_a=W_a(\mathfrak R):=\BZ R^\vee \rtimes W_0=\{t^\l w; \l \in \BZ R^\vee, w \in W_0\}, \\
\tW=\tW(\mathfrak R):=X_* \rtimes W_0=\{t^\l w; \l \in X_*, w \in W_0\}.
\end{gather*}
We call $W_a$ the {\it affine Weyl group} and $\tW$ the {\it extended affine Weyl group}. 

Let $R^+ \subseteq R$ be the set of positive roots determined by $\Pi$. The {\it base alcove} is the set $\mathfrak a=\{v \in V; 0 < \< \a, v\> < 1 \text{ for every } \a \in R^+\}.$ Let $\tilde \BS$ be the set of reflections along the walls of the base alcove $\mathfrak a$. Then $\BS_0 \subseteq \tilde \BS$ and $(W_a, \tilde \BS)$ is a Coxeter group.

Let $\Omega$ be the isotropy group in $\tW$ of the base alcove $\mathfrak a$. Then $\tW=W_a \rtimes \Omega$. We extend the length function $\ell$ and the Bruhat order $\le$ on the Coxeter group $W_a$ to $\tW$ in a natural way. By \cite{IM}, we have $$\ell(t^\l w) = \sum_{\a>0, w\i(\a)>0} |\<\l, \a\>| + \sum_{\a>0, w\i(\a)<0} |\<\l, \a\> - 1|.$$

For any $J \subseteq \tilde S$, we denote by $W_J \subseteq W_a$ the parabolic subgroup generated by $J$. Let ${}^J \tW = \{w \in \tW; w = \min (W_J w)\}$, where the minimum is taken with respect to the Bruhat order $\le$. Set $\tW^J = ({}^J \tW)\i$. If moreover, $W_J$ is finite, then we denote by $w_J$ the longest element in $W_J$. We simply write $w_0$ for $w_{\BS_0}$. We set 
\begin{gather*} {}^J \tW_{\max}=\{w \in \tW; w=\max(W_J w)\}=\{w_J w; w \in {}^J \tW\}, \\ \tW^J_{\max}=({}^J \tW_{\max}) \i=\{w \in \tW; w=\max(w W_J)\}=\{w w_J; w \in \tW^J\}.\end{gather*} If $J \subset \BS_0$, then we write ${}^J W_0$ for ${}^J \tW \cap W_0$ and $W_0^J$ for $\tW^J \cap W_0$. 

Let $X_*^+ = \{\l \in X_*; \<\l, \a\> \ge 0 \text{ for all } \a \in \Pi\}$ be the set of dominant coweights. For any $\l \in X_*^+$, we set $I(\l)=\{s \in \BS_0; s(\l)=\l\}$. Each element of $\tW$ can be written uniquely as $x t^\mu y$ with $\mu \in X_*^+$ and $x, y \in W_0$ with $t^\mu y \in {}^{\BS_0} \tW$. Here the condition $t^\mu y \in {}^{\BS_0} \tW$ is equivalent to that $y \in {}^{I(\mu)} W_0$. In this case, $\ell(x t^\mu y)=\ell(x)+\ell(t^{\mu})-\ell(y)$.

\subsection{Demazure product} \label{product}
We follow \cite[\S 1]{He07}. For any $x, y \in \tW$, the subset $\{x y'; y' \le y\}$ (resp. $\{x' y; x' \le x\}$, $\{x' y'; x' \le x, y' \le y\}$) of $\tW$ contains a unique maximal element (with respect to the Bruhat order $\le$). Moreover, we have $$\max\{x y'; y' \le y\}=\max\{x' y; x' \le x\}=\max\{x' y'; x' \le x, y' \le y\}.$$ We denote this element by $x \ast y$ and we call it the {\it Demazure product} of $x$ and $y$. Moreover, $(\tW, \ast)$ is a monoid and the Demazure product can be determined by the following two rules
\begin{itemize}
    \item $x \ast y = x y$ if $x, y \in \tW$ such that $\ell(x y) = \ell(x) + \ell(y)$;
    \item $s \ast w = w$ if $s \in \tilde \BS$, $w \in \tW$ such that $s w < w$.
\end{itemize}

The Demazure product on $\tW$ is encoded in the corresponding 0-Hecke algebra $H_0$.  By definition, $H_0$ is a $\BZ$-algebra with a linear basis $\{t_w; w \in W\}$, subject to the following relations 
\begin{enumerate}
    \item $t_w t_{w'} = t_{w w'}$ for $w, w' \in \tW$ such that $\ell(w w') = \ell(w) + \ell(w')$;
    \item $t_s^2 = - t_s$ for $s \in \tilde \BS$.
\end{enumerate}
Then for $w, w' \in \tW$ we have $t_w t_{w'} = (-1)^{\ell(w) + \ell(w') -\ell(w \ast w')} t_{w \ast w'}$.

The Demazure product also occurs naturally in the convolution product of Bruhat cells, which we will discuss in \S\ref{sec:conv}. It has also found applications in Lusztig's theory of total positivity \cite{Lu-1} and \cite{Lu-2}, and more recently, in the study of generic Newton points \cite{He-Demazure}. 

\subsection{Quantum Bruhat graphs}
The quantum Bruhat graph was introduced by Brenti, Fomin, and Postnikov in \cite{BFP}. It is an extension of the graph formed by covering relations in the Bruhat order of a finite Weyl group. It is naturally related to the quantum cohomology rings of flag varieties, see \cite{FGP}. It has also found applications in the description of the Bruhat order of affine Weyl groups \cite{LS} and in the study of generic Newton points \cite{M}. 

By definition, the quantum Bruhat graph $\Gamma$ associated to the root datum $\mathfrak R$ is a directed graph with 
\begin{enumerate}
    \item vertices indexed by elements of $W_0$;
    \item upward arrows $w \rightharpoonup w s_\a$ with $\a \in R^+$ if $\ell(w s_\a) = \ell(w) + 1$;
    \item downward arrow $w \rightharpoondown w s_\a$ with $\a \in R^+$ if $\ell(w s_\a) = \ell(w) - \<2 \rho, \a^\vee\> + 1$.
\end{enumerate}
Here $\rho$ is half sum of the positive roots in $R$. The weight of an upward arrow is defined to be zero, and the weight of a downward arrow $w \rightharpoondown w'$ is defined to be $\a^\vee$. The weight of a directed path is the sum of the weights of edges in the path. Following \cite[Lemma 1]{P}, for $w, w' \in W_0$, $\wtt(w, w')$ is defined to be the weight of any/some shortest directed path in $\Gamma$ from $w$ to $w'$. Note that $\wtt(w, w')$ is a nonnegative integral combination of simple coroots in $\mathfrak R$. 

\smallskip

The main purpose of this paper is to establish an interesting connection between the Demazure product on $\tW$ and the quantum Bruhat graph of $\mathfrak R$. 

\begin{theorem}\label{main}
Let $\mu_1, \mu_2 \in X_*^+$ and $x, y \in W_0$. If $t^{\mu_1} y \in {}^{\BS_0} \tW$, $x t^{\mu_2} \in \tW^{\BS_0}_{\max}$, then 
\begin{enumerate}
    \item $\mu_1 + \mu_2 - \wtt(y\i, x) \in X_*^+$;
    
    \item $(t^{\mu_1} y) \ast (x t^{\mu_2}) \in W_0 t^{\mu_1+\mu_2 - \wtt(y\i, x)}$.
\end{enumerate}
\end{theorem}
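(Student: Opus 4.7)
The proof adopts the strategy described in the introduction: use the generic Newton point to transfer from the Demazure product to Milićević's quantum-Bruhat-graph formula, then remove the superregularity assumption via a translation argument.

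\smallskip

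First I would carry out a reduction to the superregular case. Fix $\eta \in X_*^+$ sufficiently deep in the dominant chamber that multiplication by $t^\eta$ is length-additive with any bounded element, so that $t^\eta \ast z = t^\eta z$ and $z \ast t^\eta = z t^\eta$. Using associativity of the Demazure product,
\[
(t^{\mu_1+\eta} y) \ast (x t^{\mu_2+\eta}) \;=\; t^\eta \ast \bigl((t^{\mu_1} y) \ast (x t^{\mu_2})\bigr) \ast t^\eta.
\]
If the inner Demazure product has the form $z t^\lambda z'$ with $\lambda \in X_*^+$, the right-hand side becomes $z t^{z\i(\eta)+\lambda+z'(\eta)} z'$, and for $\eta$ sufficiently regular this translation is dominant and determines $\lambda$ uniquely. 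Crucially, the hypotheses $t^{\mu_1}y \in {}^{\BS_0} \tW$ and $x t^{\mu_2} \in \tW^{\BS_0}_{\max}$ are preserved under the shifts $\mu_1 \mapsto \mu_1+\eta$, $\mu_2 \mapsto \mu_2+\eta$ (since $\eta$ is regular, stabilizers become trivial), while $\wtt(y\i,x)$ depends only on $y\i$ and $x$, hence is unaffected.

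\smallskip

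In the superregular case, the generic Newton point acts as the bridge. By the definition of the Demazure product as the maximum of all products $u'v'$ with $u' \le t^{\mu_1}y$ and $v' \le x t^{\mu_2}$, the ordinary product $(t^{\mu_1}y)(x t^{\mu_2})$ and the Demazure product $(t^{\mu_1}y) \ast (x t^{\mu_2})$ lie in the closure of a common Bruhat cell in the affine flag variety and therefore share the same generic Newton point. For the ordinary product, Milićević's theorem \cite{M} evaluates this Newton point as the dominant $W_0$-representative of a vector derived from shortest paths in the quantum Bruhat graph, with translation component $\mu_1 + \mu_2 + 2\eta - \wtt(y\i, x)$. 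On the other hand, the Newton point of a decomposed element $z t^\lambda z'$ in the split case is the dominant average of $\lambda$ over the cyclic group generated by $z'z$; for superregular $\lambda$ this recovers $\lambda$ uniquely. Matching the two expressions yields $\lambda = \mu_1 + \mu_2 + 2\eta - \wtt(y\i,x)$ in the superregular setting, and subtracting the contribution of $\eta$ via the reduction step gives $\lambda = \mu_1 + \mu_2 - \wtt(y\i,x)$ in general, proving (2). Part (1) is then automatic, since the translation part in the decomposition $z t^\lambda z'$ lies in $X_*^+$ by construction.

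\smallskip

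The main obstacle is making precise the identification between Milićević's Newton-point formula (proved only in a superregular range, and only for ordinary, not Demazure, products) and the translation part of the Demazure product. While the superregular regime makes the averaging transparent, the reduction step must be handled carefully: the finite-part factors $z, z'$ in the normal form $z t^\lambda z'$ may in principle shift when $\eta$ is added, so one must check that the translation part changes by exactly $\eta$ on each side up to a $W_0$-rotation that becomes invisible after passage to the dominant chamber. This is where invoking \cite{M} as a black box pays off: the combinatorial compatibility of the quantum Bruhat graph weights with translation is already built into that result.
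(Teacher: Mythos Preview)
Your outline has the right spirit but several of the bridges you build do not hold. First, the claim that the ordinary product $(t^{\mu_1}y)(xt^{\mu_2})$ and the Demazure product $(t^{\mu_1}y)\ast(xt^{\mu_2})$ ``share the same generic Newton point'' is not justified: the ordinary product is merely $\le$ the Demazure product in Bruhat order, and $[b_{w'}]$ is not constant on Bruhat intervals. Second, your computation of the Newton point of $zt^{\lambda}z'$ as an averaging of $\lambda$ conflates the Newton point of the \emph{element} $zt^{\lambda}z'$ with the \emph{generic} Newton point $[b_{zt^{\lambda}z'}]$ of the Bruhat cell; the latter is exactly what is hard, and your averaging formula does not compute it. Third, your two-sided shift by $\eta$ introduces the non-dominant terms $z^{-1}(\eta)$ and $z'(\eta)$, and you correctly flag at the end that you do not know how to control $z,z'$; this is a real obstruction, not a detail.

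The paper circumvents all three issues by two observations you are missing. Since $xt^{\mu_2}\in\tW^{\BS_0}_{\max}$ one has $w_1\ast w_2=(w_1\ast w_2)\ast w_0$, so the Demazure product itself lies in $\tW^{\BS_0}_{\max}$ and hence already has the form $zt^{\eta}$ with $\eta$ dominant and \emph{no} right $W_0$-factor; then $[b_{zt^{\eta+\chi}}]=[\eta+\chi]$ is immediate. Consequently the shift need only be one-sided: $(w_1\ast w_2)\ast t^{\chi}=zt^{\eta+\chi}$, and subtracting $\chi$ at the end is trivial. To apply Mili\'cevi\'c one still needs an element of the form $xt^{\text{superregular}}y$; the paper obtains this not from $w_1\ast w_2'$ but from the \emph{cyclic shift} identity $[b_{w_1\ast w_2'}]=[b_{w_2'\ast w_1}]$ (Lemma~\ref{shift}), and then checks that in the reversed order $w_2' w_1 = xt^{\chi+\mu_1+\mu_2}y$ is length-additive, so the Demazure product equals the ordinary product there. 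This is the step that legitimately bridges Demazure product and Mili\'cevi\'c's formula.
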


\section{Proof of the Main theorem}\label{sec:2}
Theorem \ref{main} is a combinatorial statement, and it is possible to be proved in a purely combinatorial way. However, we will argue in a more geometric way, by relating the Demazure product to the generic Newton points and then using the work of Milic\'{e}vi\'{c} \cite{M} on the connection between the generic Newton points and the quantum Bruhat graphs. 

\subsection{Iwahori-Weyl groups}\label{sec:twbg}
First, we recall the Iwahori-Weyl groups and their relations with the extended affine Weyl groups in \S\ref{root}. 

Let $F$ be a non-archimedean field and let $\breve F$ be the completion of the maximal unramified extension of $F$. The Frobenius automorphism of $\breve F$ over $F$ is denoted by $\s$.

Let $\bG$ be a connected reductive group over $F$. Let $S$ be a maximal $\breve F$-split torus defined over $F$. Let $\CA$ be the apartment of $G_{\breve F}$ corresponding to $S_{\breve F}$. Fix a $\s$-stable alcove $\fka$ in $\CA$. Set $\breve G = \bG(\breve F)$ and let $\breve I \subseteq \breve G$ be the Iwahori subgroup corresponding to $\fka$. Then $\breve I$ is $\s$-stable. Let $T$ be the centralizer of $S$ in $\bG$ and $N$ be the normalizer of $T$ in $\bG$. The Iwahori-Weyl group is defined by $$\tW(\bG) = N(\breve F) / T(\breve F) \cap \breve I.$$ The affine Weyl group is defined by $$W_a(\bG) = N(\breve F) \cap \breve G_1 / T(\breve F) \cap \breve I,$$ where $\breve G_1 \subseteq \breve G$ is the subgroup generated by all Iwahori subgroups of $\breve G$. 

Let $W_0(\bG)=N(\breve F) / T(\breve F)$ be the relative Weyl group of $\bG$. Then $$\tW(\bG) = X_*(T)_\Gamma \rtimes W_0(\bG),$$ where $\Gamma=\Gal(\overline{\breve F}/\breve F)$ is the absolute Galois group, and $X_*(T)_\Gamma$ is the group of $\Gamma$-coinvariants of the coweight lattice $X_*(T)$ of $T$.


By \cite[Appendix]{PR}, there is a reduced root system $R$ associated to $\bG$ such that $W_0(\bG) = W_0(R)$ and $W_a(\bG) = W_a(R)$. Moreover, any extended affine Weyl group can be realized as the Iwahori-Weyl group of some connected reductive group split over $F$. On the other hand, $\tW(\bG)$ for any connected reductive group $\bG$, in general, may not be an extended affine Weyl group in \S\ref{root}. The reason is that the group $X_*(T)_{\G}$ is not torsion-free in general. However, the torsion part $X_*(T)_{\G, \text{tor}}$ always lies in the center of $\tW(\bG)$ and $\tW(\bG)/X_*(T)_{\G, \text{tor}} \cong \tW(\mathfrak R)$ for some reduced root datum $\mathfrak R$ whose underlying root system is $R$. See \cite[A1]{HN18}. 

Under the identification $\tW(\bG)/X_*(T)_{\G, \text{tor}} \cong \tW(\mathfrak R)$, the length functions and Bruhat orders on $\tW(\bG)$ and $\tW(\mathfrak R)$ are compatible. We still denote the length function on $\tW(\bG)$ by $\ell$ and the Bruhat order on $\tW(\bG)$ by $\le$. 
 
The Frobenius automorphism $\s$ induces actions on $\breve G$ and on $\tW$, which are still denoted by $\s$. Then $\s$ preserves the length function $\ell$ and the Bruhat order $\le$.

\subsection{Convolution product}\label{sec:conv}
The Demazure product on $\tW(\bG)$ can be realized via the convolution product of Schubert varieties in the affine flag variety $\breve G / \breve I$. In this paper, we will present a variation of the convolution product. 

For any $w \in \tW(\bG)$, we choose a representative $\dot w$ in $N(\breve F)$. We have the decomposition $\breve G=\bigsqcup_{w \in \tW(\bG)} \breve I \dot w \breve I$. Note that $\breve I \dot w \breve I$ is an admissible subscheme of $\breve G$ in the sense of Grothendieck (see \cite[A2]{He-KR}). The closure $\overline{\breve I \dot w \breve I}$ of $\breve I \dot w \breve I$ equals $\bigsqcup_{w' \le w} \breve I \dot w' \breve I$. Let $w, w' \in \tW(\bG)$ and $w''=w \ast w'$. It is well-known that  $$\overline{\breve I \dot w \breve I} \, \cdot \, \overline{\breve I \dot w' \breve I}=\overline{\breve I \dot w'' \breve I}.$$  


\subsection{Generic Newton point}\label{sec:Newton}
For any $g, g' \in \breve G$, we set $g \cdot_\s g'=g g' \s(g) \i$. For $b \in \breve G$, we denote by $[b]$ the $\s$-conjugacy class of $b$. Let $B(\bG)$ be the set of $\s$-conjugacy classes of $\breve G$. 

By \cite[Theorem A.1]{He-KR}, each $\s$-conjugacy class is an admissible subscheme of $\breve G$. Then its closure is a union of some $\s$-conjugacy classes of $\breve G$. We denote the partial order on $B(\bG)$ (defined via the closure relation) by $\le$. One may also define other partial orders on $B(\bG)$ via two different combinatorial ways, and it is proved in \cite[Theorem 3.1]{He-KR} that these partial orders coincide. We refer to \cite[\S 3]{He-KR} for a detailed discussion on these partial orders. 

For $w \in \tW(\bG)$, we set $B(\bG)_w = \{[b] \in B(\bG); [b] \cap \breve I \dot w \breve I \neq \emptyset\}$. Since $\breve I \dot w \breve I$ is irreducible, there is a unique $\s$-conjugacy class $[b_w]$ of $\breve G$ such that the intersection $[b_w] \cap \breve I \dot w \breve I$ (as an admissible subscheme) is dense in $\breve I \dot w \breve I$. We call $[b_w]$ the {\it generic Newton point}\footnote{Here the name comes from the fact that the $\s$-conjugacy classes $[b]$ are classified by Kottwitz via the image under the Kottwitz map and the Newton point of $[b]$.} associated to $w$. Then by definition, $[b_w]$ is the unique maximal element in $B(\bG)_w$ with respect to the partial order $\le$. 

We will use the following facts on $B(\bG)$. 

\begin{enumerate}
    \item Let $w \in \tW(\bG)$. Applying Lang's theorem on $T(\breve F) \cap \breve I$,  one deduces that the $\s$-conjugacy class of $\dot w$ is independent of the choice of the representative $\dot w$ of $w$. In particular, for any $\l \in X_*(T)_{\G}$, we write $[\l]$ for the $\s$-conjugacy class of $\dot w$, where $w=t^\l$. 
    
    \item Suppose that $\bG$ is split over $F$. Let $\l, \l'$ be dominant coweights. Then $[\l]=[\l']$ if and only if $\l=\l'$. This follows from  Kottwitz's classification of $B(\bG)$ in \cite{Ko1} and \cite{Ko2} (see also \cite[Proposition 3.6]{He14}).
    
    \item Suppose that $\bG$ is quasi-split over $F$. Let $x \in W_0(\bG)$ and $\l$ be a dominant coweight. Then $[b_{x t^\l}]=[\l]$. See \cite[Theorem 4.2]{He21}. 
\end{enumerate}

Although not needed in this paper, it is worth mentioning that if $\bG$ is quasi-split over $F$ and $\l$ is a dominant coweight, then the Newton point of $[\l]$ is the $\s$-averaging of $\l$. 

The following result of Milic\'{e}vi\'{c} plays a key role in the proof of Theorem \ref{main}. 

\begin{theorem} [{\cite[Theorem 3.2]{M}}] \label{regular}
Let $\bG$ be a split group. Let $x, y \in W_0(\bG)$ and $\mu \in X_*(T)^+$. Suppose that $\mu$ is sufficiently regular. Then $$[b_{x t^\mu y}] = [\mu - \wtt(y \i, x)].$$ 
\end{theorem}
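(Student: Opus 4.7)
The plan is to identify the generic Newton class $[b_{xt^\mu y}]$ with $[\mu - \wtt(y\i,x)]$ by proving two matching inequalities in $B(\bG)$. Since $\bG$ is split, the partial order on $B(\bG)$ restricted to pure-translation classes $[\lambda]$ with $\lambda$ dominant coincides with the dominance order on $X_*(T)^+$, and distinct dominant coweights give distinct classes by fact (2) of \S\ref{sec:Newton}. For $\mu$ sufficiently regular the coweight $\mu - \wtt(y\i,x)$ is dominant, so the goal reduces to showing (i) $[\mu - \wtt(y\i,x)] \in B(\bG)_{xt^\mu y}$ and (ii) every $[b] \in B(\bG)_{xt^\mu y}$ satisfies $[b] \le [\mu - \wtt(y\i,x)]$.

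For (i), the main input is the Lam--Shimozono description of the Bruhat order on $\tW$ in the superregular region in terms of the quantum Bruhat graph. Choose a shortest directed path $\pi \colon y\i = z_0 \to z_1 \to \cdots \to z_r = x$ in $\Gamma$, and let $\beta_1^\vee, \ldots, \beta_k^\vee$ be the coroots attached to its downward arrows, so that $\wtt(y\i, x) = \sum_i \beta_i^\vee$. Lifting $\pi$ via Lam--Shimozono yields a saturated Bruhat chain $xt^\mu y = w_0 > w_1 > \cdots > w_r$ in $\tW$ in which each step decreases the translation part by one $\beta_i^\vee$ at a downward arrow and readjusts the finite part at an upward one, terminating at an element in the $W_0$-orbit of $t^{\mu - \wtt(y\i,x)}$. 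Since $\bG$ is split, the $\sigma$-conjugacy class of any such pure translation is precisely $[\mu - \wtt(y\i,x)]$, giving (i).

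For (ii), the same framework parameterizes every $w' \le xt^\mu y$ by a path in $\Gamma$ between vertices controlled by the finite parts $y$ and $x$, and the translation part of $w'$ equals $\mu$ minus the weight of that path. Since the shortest path has weight $\wtt(y\i, x)$, any such $w'$ has translation part of the form $\mu - \gamma$ with $\gamma - \wtt(y\i, x)$ a nonnegative integer combination of positive coroots. Invoking the Mazur-type inequality that the Newton point of $[b_{w'}]$ is dominated by the dominant conjugate of its translation part, one obtains $\nu([b_{w'}]) \le \mu - \wtt(y\i, x)$ in the dominance order, which is (ii).

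The main obstacle is the combinatorial side of (ii): one must check that the shortest-path weight supplies the \emph{sharp} Newton-point bound, not merely an inequality on translation parts, and that the Mazur estimate is attained by exactly the class in (i). The superregularity of $\mu$ is essential throughout, as it guarantees that the affine Bruhat covers stay away from the walls of the Weyl chambers so the Lam--Shimozono dictionary applies uniformly and one avoids boundary corrections that would otherwise force a much more delicate case analysis. Matching (i) and (ii) then yields the claimed equality $[b_{xt^\mu y}] = [\mu - \wtt(y\i, x)]$.
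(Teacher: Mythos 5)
The paper does not prove this statement at all: Theorem~\ref{regular} is quoted verbatim from Milic\'{e}vi\'{c} \cite[Theorem 3.2]{M} and used as a black box, and the introduction explicitly warns that its proof ``involves a long and technical analysis on the paths in the quantum Bruhat graph encoding saturated chains in the strong Bruhat order.'' So there is no internal argument to compare yours against; the only fair comparison is with what a complete proof would require, and measured against that your proposal is a plausible roadmap of Milic\'{e}vi\'{c}'s strategy rather than a proof. Step (i) is essentially right in outline (lift a shortest path in $\Gamma$ via Lam--Shimozono to exhibit a translation element $t^\lambda \le x t^\mu y$ with $\bar\lambda = \mu - \wtt(y\i,x)$, hence $[b_{xt^\mu y}] \ge [\mu - \wtt(y\i,x)]$), though you should phrase the conclusion via $w' \le w \Rightarrow \breve I \dot w' \breve I \subseteq \overline{\breve I \dot w \breve I}$ rather than claiming $[\mu-\wtt(y\i,x)]$ meets $\breve I \dot w \breve I$ itself.

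The genuine gap is in step (ii). Your claim that ``the same framework parameterizes every $w' \le xt^\mu y$ by a path in $\Gamma$ \ldots and the translation part of $w'$ equals $\mu$ minus the weight of that path'' is false as stated: the Lam--Shimozono dictionary describes Bruhat covers between \emph{superregular} elements, whereas the elements below $xt^\mu y$ that could a priori carry a large $\sigma$-conjugacy class include translations and other elements far from the superregular region. Concretely, to get the upper bound one must rule out, e.g., $t^{\lambda} \le x t^\mu y$ for any $\lambda$ with $\bar\lambda \not\le \mu - \wtt(y\i,x)$; this is exactly the sharpness statement, and your appeal to a ``Mazur-type inequality'' bounding $\nu([b_{w'}])$ by the dominant conjugate of the translation part of $w'$ does not deliver it, because for $w' \le w$ that dominant conjugate can still exceed $\mu - \wtt(y\i,x)$ (it is bounded only by $\mu$ in general). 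You acknowledge this obstacle yourself, but acknowledging it does not close it: the descending-chain analysis showing that every saturated chain from $xt^\mu y$ down to a translation loses at least $\wtt(y\i,x)$ from the coweight is the technical heart of \cite{M} and is entirely absent here. If your goal is to reprove the theorem you must supply that analysis; if your goal is to use it, do as the paper does and cite \cite[Theorem 3.2]{M}.
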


\begin{remark}
Here sufficiently regular means $\<\mu, \a\> \ge M$ for all simple root $\a$ of $\bG$, where $M$($\ge 6 \ell(w_0)$) is a large number given explicitly in \cite[(6.1)]{M}. 
\end{remark}

We also need the following result. 

\begin{lemma} \label{shift}
Let $w_1, w_2 \in \tW$. Then $[b_{w_1 \ast w_2}]=[b_{w_2 \ast \s(w_1)}]$. 
\end{lemma}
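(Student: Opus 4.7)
The plan is to realize both sides of the claimed identity via convolution products of Schubert cells and to exhibit a pointwise $\s$-conjugacy between them.

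The starting point is the elementary identity: for any $a, b \in \breve G$, one has $a\i \cdot_\s (ab) = a\i(ab)\s(a\i)\i = b\s(a)$, so $ab$ and $b\s(a)$ always lie in the same $\s$-conjugacy class. I would apply this with $a \in \breve I \dot{w_1} \breve I$ and $b \in \breve I \dot{w_2} \breve I$. Since $\breve I$ is $\s$-stable, $\s$ sends $\breve I \dot{w_1} \breve I$ onto $\breve I \dot{\s(w_1)} \breve I$, so as $a, b$ vary, $ab$ ranges over $\breve I \dot{w_1} \breve I \cdot \breve I \dot{w_2} \breve I$ while the corresponding $b\s(a)$ ranges over $\breve I \dot{w_2} \breve I \cdot \breve I \dot{\s(w_1)} \breve I$. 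Running the argument in reverse (taking $a = \s\i(c)$) makes this a symmetric correspondence: the same set of $\s$-conjugacy classes of $\breve G$ meets each of the two convolution products.

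By the convolution description recalled in \S\ref{sec:conv}, the closure of $\breve I \dot{w_1} \breve I \cdot \breve I \dot{w_2} \breve I$ is $\overline{\breve I \dot{w_1 \ast w_2} \breve I}$. This convolution is $\breve I$-biinvariant and dense in its closure, hence a union of Schubert cells $\breve I \dot{w'} \breve I$ with $w' \le w_1 \ast w_2$ that contains the open stratum $\breve I \dot{w_1 \ast w_2} \breve I$. For any $\s$-conjugacy class $[b]$ meeting some $\breve I \dot{w'} \breve I$ with $w' \le w_1 \ast w_2$, applying the same density/genericity argument to the irreducible admissible subscheme $\overline{\breve I \dot{w_1 \ast w_2} \breve I}$ (whose generic $\s$-conjugacy class is $[b_{w_1 \ast w_2}]$) gives $[b] \le [b_{w_1 \ast w_2}]$. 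Therefore $[b_{w_1 \ast w_2}]$ is the maximum $\s$-conjugacy class meeting $\breve I \dot{w_1} \breve I \cdot \breve I \dot{w_2} \breve I$; symmetrically, $[b_{w_2 \ast \s(w_1)}]$ is the maximum class meeting $\breve I \dot{w_2} \breve I \cdot \breve I \dot{\s(w_1)} \breve I$. Combined with the pointwise $\s$-conjugacy of the previous paragraph, the two maxima must coincide, yielding $[b_{w_1 \ast w_2}] = [b_{w_2 \ast \s(w_1)}]$.

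The main delicate point is the monotonicity step, namely that $[b_{w'}] \le [b_{w_1 \ast w_2}]$ for $w' \le w_1 \ast w_2$, together with the more basic fact that the generic $\s$-conjugacy class of the closure $\overline{\breve I \dot{w_1 \ast w_2} \breve I}$ coincides with that of its open stratum. Both are standard consequences of the theory of the partial order on $B(\bG)$ recalled in \S\ref{sec:Newton} (cf.\ \cite[\S3]{He-KR}); modulo these, the argument is purely formal.
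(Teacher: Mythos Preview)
Your argument is correct and follows essentially the same route as the paper's proof. The paper packages the argument slightly more cleanly by working with closures throughout: it characterizes $[b_w]$ as the unique maximal $\s$-conjugacy class contained in $\breve G \cdot_\s \overline{\breve I \dot w \breve I}$, then uses the convolution identity $\overline{\breve I \dot w_1 \breve I}\cdot\overline{\breve I \dot w_2 \breve I}=\overline{\breve I \dot{w_1\ast w_2}\breve I}$ together with your same pointwise identity to get $\breve G \cdot_\s \overline{\breve I \dot{w_1\ast w_2}\breve I}=\breve G \cdot_\s \overline{\breve I \dot{w_2\ast \s(w_1)}\breve I}$ in one line, thereby avoiding the separate monotonicity discussion you flag at the end.
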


\begin{proof}
For any subset $X$ of $\breve G$, we set $\breve G \cdot_\s X=\{g \cdot_\s x; g \in \breve G, x \in X\}$. 
By definition, for any $w \in \tW$, $[b_w]$ is the unique $\s$-conjugacy class $[b]$ of $\breve G$ such that the intersection $[b] \cap \breve I \dot w \breve I$ (as an admissible subscheme) is dense in $\overline{\breve I \dot w \breve I}$. Hence $[b_w]$ is the unique maximal $\s$-conjugacy class of $\breve G$ that is contained in $\breve G \cdot_\s \overline{\breve I \dot w \breve I}$. 

Set $w_3=w_1 \ast w_2$ and $w_4=w_2 \ast \s(w_1)$. We have that 
$$\breve G \cdot_\s \overline{\breve I \dot w_3 \breve I}=\breve G \cdot_\s (\overline{\breve I \dot w_1 \breve I} \, \cdot \, \overline{\breve I \dot w_2 \breve I})=\breve G \cdot_\s (\overline{\breve I \dot w_2 \breve I} \, \cdot \, \s(\overline{\breve I \dot w_1 \breve I}))=\breve G \cdot_\s \overline{\breve I \dot w_4 \breve I}.
$$

Thus $[b_{w_3}]=[b_{w_4}]$. The statement is proved. 
\end{proof}

\smallskip

Now we prove Theorem \ref{main}. 

\subsection{Proof of Theorem \ref{main}}
Let $\bG$ be the connected, split reductive group over $F$ with root datum $\mathfrak R$. In this case, the action of $\s$ on $\tW$ is the identity map. Let $w_1=t^{\mu_1} y$ and $w_2=x t^{\mu_2}$. By our assumption, $w_2 \in \tW^{\BS_0}_{\max}$. Thus $w_2=w_2 \ast w_0$. So $$w_1 \ast w_2=w_1 \ast (w_2 \ast w_0)=(w_1 \ast w_2) \ast w_0.$$ Hence $w_1 \ast w_2 \in \tW^{\BS_0}_{\max}$. In particular, $w_1 \ast w_2=z t^\eta$ for some $z \in W_0$ and $\eta \in X_*^+$. 

Let $\chi$ be a sufficiently regular dominant coweight in the sense of \cite{M}. Set $w'_2=w_2 t^\chi=x t^{\mu_2+\chi}$. Then $w'_2=w_2 \ast t^\chi$. We have $w_1 \ast w'_2=(w_1 \ast w_2) \ast t^\chi=z t^{\eta+\chi}$. By \S\ref{sec:Newton} (3), $[b_{z t^{\eta+\chi}}]=[\eta+\chi]$.

On the other hand, let $w = w'_2 w_1 = x t^{\chi+\mu_1 + \mu_2} y$. Since $\chi + \mu_2$ is dominant and $t^{\mu_1}y \in {}^{\BS_0} \tW$, $t^{\chi+\mu_1 + \mu_2} y \in {}^{\BS_0} \tW$. Therefore, $\ell(w)=\ell(x)+\ell(t^{\mu_1+\mu_2})-\ell(y) = \ell(w_1) + \ell(w_2')$. In particular, $w=w'_2 \ast w_1$. By Theorem \ref{regular} and Lemma \ref{shift},  \[ [\chi+\mu_1+\mu_2-\wtt(y\i, x)] = [b_{x t^{\chi+\mu_1+\mu_2} y}] = [b_{w'_2 \ast w_1}] = [b_{w_1 \ast w'_2}]= [\chi+\eta].\] Since $\chi$ is sufficiently regular, both $\chi+\mu_1+\mu_2-\wtt(y\i, x)$ and $\chi+\eta$ are dominant. By \S\ref{sec:Newton} (2), $\chi+\mu_1+\mu_2-\wtt(y\i, x)=\chi+\eta$. Hence $\eta=\mu_1+\mu_2-\wtt(y\i, x)$. In particular, $\mu_1+\mu_2-\wtt(y\i, x) \in X_*^+$. 

\subsection{Some comments}\label{sec:comm} 
We have the following interesting property on the quantum Bruhat graph. Namely, for any $x, y \in W_0 (\mathfrak R)$ and any simple root $\a$ of $\mathfrak R$, we have $$\<\wtt(y, x), \a\> \le \d_{y \a}+\d_{x \a}.$$ Here for a root $\b \in R$, $\d_\b=\begin{cases} 0, & \text{ if } \b \in R^+ \\ 1, & \text{otherwise} \end{cases}.$ For the root datum of adjoint type, the claim follows directly from Theorem \ref{main} (1). The statement for arbitrary root datum can be deduced easily from the adjoint root datum. 

Theorem \ref{main} is stated for extended affine Weyl groups $\tW(\mathfrak R)$, not for the Iwahori-Weyl groups $\tW(\bG)$. As we mentioned in \S\ref{sec:twbg}, the Iwahori-Weyl groups $\tW(\bG)$, in general, may not be of the form $\tW(\mathfrak R)$ for some reduced root datum $\mathfrak R$. However, using the identification $\tW(\bG)/X_*(T)_{\G, \text{tor}} \cong \tW(\mathfrak R)$ and the fact that $X_*(T)_{\G, \text{tor}}$ lies in the center of $\tW(\bG)$, the statement in Theorem \ref{main} remains valid for arbitrary Iwahori-Weyl group $\tW(\bG)$.

\section{Applications}
\subsection{Lowest two-sided cells} 
Let $C_{\text{lowest}}$ be the lowest Kazhdan-Lusztig two-sided cell of $\tW$. The explicit description is obtained by the work of Lusztig \cite{L}, Shi \cite{S} and B\'edard \cite{B}. Let $x, y \in W_0$ and $\mu \in X_*^+$ with $t^\mu y \in {}^{\BS_0} \tW$. Then $x t^\mu y \in C_{\text{lowest}}$ if and only if $\<\mu, \a\>+\d_{x \a}-\d_{y \i \a} \neq 0$ for any $\a \in \Pi$. In other words, $C_{\text{lowest}}$ consists of elements in the shrunken Weyl chambers. The latter terminology is often used in the literature on the affine Deligne-Lusztig varieties. The following result generalizes Theorem \ref{regular} to the elements in the lowest two-sided cell, which was conjectured by Milic\'{e}vi\'{c} in \cite[\S 6.3]{M} for split groups. 


\begin{proposition}\label{prop:newton}
Suppose that $\bG$ is quasi-split and adjoint over $F$. Let $x, y \in W_0(\bG)$ and $\mu \in X_*(T)_{\G}^+$ with $t^\mu y \in {}^{\BS_0} \tW(\bG)$. If $x t^\mu y \in C_{\text{lowest}}$, then $[b_{x t^\mu y}]=[\mu-\wtt(y \i, \s(x))]$. 
\end{proposition}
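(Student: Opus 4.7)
The plan is to mimic the strategy of the proof of Theorem \ref{main} by passing to a super-regular shift and then stripping it off. Since $t^\mu y\in{}^{\BS_0}\tW(\bG)$, the length formula gives $\ell(w)=\ell(x)+\ell(t^\mu y)$, so $w=x\ast(t^\mu y)$, and Lemma \ref{shift} then yields $[b_w]=[b_v]$ with $v:=(t^\mu y)\ast\s(x)$.

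Choose $\chi\in X_*(T)_\G^+$ strictly dominant, super-regular, and $\s$-invariant (for instance, a sufficiently large integer multiple of a $\s$-orbit sum of fundamental coweights). A direct length computation using the Iwahori--Matsumoto formula shows that $\s(x) t^\chi\in\tW^{\BS_0}_{\max}$ and $\ell(\s(x) t^\chi)=\ell(\s(x))+\ell(t^\chi)$, so $\s(x) t^\chi=\s(x)\ast t^\chi$. Applying Theorem \ref{main} to $t^\mu y\in{}^{\BS_0}\tW(\bG)$ and $\s(x) t^\chi\in\tW^{\BS_0}_{\max}$ gives
\[
(t^\mu y)\ast(\s(x) t^\chi)=z\,t^{\mu+\chi-\wtt(y\i,\s(x))}
\]
for some $z\in W_0$, with $\mu+\chi-\wtt(y\i,\s(x))\in X_*^+$. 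By \S\ref{sec:Newton}(3), this element has generic Newton class $[\mu+\chi-\wtt(y\i,\s(x))]$. Associativity of $\ast$ rewrites the left-hand side as $v\ast t^\chi$, so $[b_{v\ast t^\chi}]=[\mu+\chi-\wtt(y\i,\s(x))]$.

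What remains is a \emph{shift lemma}: for any $u\in\tW(\bG)$ and $\chi$ as above, $[b_{u\ast t^\chi}]$ is obtained from $[b_u]$ by adding $\chi$ to the Newton point and the image of $\chi$ to the Kottwitz invariant. Kottwitz additivity is automatic since $u\ast t^\chi\equiv ut^\chi\pmod{W_a}$. The Newton additivity is the main obstacle; I would prove it by invoking the characterization of the generic Newton point as the asymptotic translation part of the Demazure powers $u^{\ast N}=u\ast\s(u)\ast\cdots\ast\s^{N-1}(u)$ from \cite{He-Demazure}, and then showing that the translation part of $(u\ast t^\chi)^{\ast N}$ exceeds that of $u^{\ast N}$ by exactly $N\chi$. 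The $\s$-invariance of $\chi$ is crucial to commute $t^\chi$ past the $\s$-twists inside the iterated Demazure product, and the bookkeeping is controlled by repeated applications of Theorem \ref{main} together with the path structure in the quantum Bruhat graph. Granting the shift lemma, $[b_v]$ has the same Newton and Kottwitz invariants as $[\mu-\wtt(y\i,\s(x))]$; in the quasi-split adjoint setting these invariants determine the $\s$-conjugacy class, yielding $[b_w]=[b_v]=[\mu-\wtt(y\i,\s(x))]$.
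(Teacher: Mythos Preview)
Your reduction to $[b_w]=[b_v]$ with $v=(t^\mu y)\ast\s(x)$ via Lemma~\ref{shift} is fine, and so is the computation $v\ast t^\chi=z\,t^{\mu+\chi-\wtt(y\i,\s(x))}$. The gap is the shift lemma. As you state it (``for any $u\in\tW(\bG)$''), it is \emph{false}. Take $\bG=\SL_2$ split, $u=t^{-\a^\vee}=s_1 s_0$, and $\chi=n\a^\vee$. By Lemma~\ref{shift} and \S\ref{sec:Newton}(3), $[b_u]=[b_{s_0 s_1}]=[b_{t^{\a^\vee}}]=[\a^\vee]$, so the Newton point of $[b_u]$ is $\a^\vee$. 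On the other hand a direct computation in the infinite dihedral group gives $u\ast t^\chi=s_1(s_0 s_1)^n=s_1 t^{n\a^\vee}$, whence $[b_{u\ast t^\chi}]=[n\a^\vee]$ by \S\ref{sec:Newton}(3). Thus the Newton points differ by $(n-1)\a^\vee$, not by $\chi=n\a^\vee$. The same example kills your Demazure-power heuristic: one checks $(u\ast t^\chi)^{\ast N}=s_1 t^{Nn\a^\vee}$ while $u^{\ast N}=t^{-N\a^\vee}$, so the translation parts differ by $N(n-1)\a^\vee$, not $N\chi$. The point is that $t^\chi$ does \emph{not} commute with $\s(u)$ in the Demazure product, and the discrepancy is exactly a quantum-Bruhat weight that you cannot control without already knowing the answer.

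The paper's proof avoids the shift entirely by choosing a finer factorisation of $w$. The hypothesis $w\in C_{\text{lowest}}$ is precisely what allows one (via \cite[Proposition~2.1.5 and Corollary~2.1.7]{LLHLM}) to write $w=w_1 w_2$ with $w_1=x t^{\mu_1}\in\tW^{\BS_0}_{\max}$, $w_2=t^{\mu_2}y\in{}^{\BS_0}\tW$, and $\ell(w)=\ell(w_1)+\ell(w_2)$. After cycling with Lemma~\ref{shift}, $w_2\ast\s(w_1)=(t^{\mu_2}y)\ast(\s(x)t^{\s(\mu_1)})$ already fits the hypotheses of Theorem~\ref{main}, giving directly an element of the form $z t^\eta$ with $\eta$ dominant; then \S\ref{sec:Newton}(3) finishes. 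Your decomposition $w=x\cdot(t^\mu y)$ is too coarse: $\s(x)\in W_0$ has no translation part, so $(t^\mu y)\ast\s(x)$ is not of the shape required by Theorem~\ref{main}, which is what forces you into the (false) shift lemma. The fix is to absorb a dominant translation into the $x$-factor from the start, and this is exactly what the \cite{LLHLM} decomposition provides.
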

\begin{remark}
One important application of the explicit formula of the generic Newton points is to study the cordial elements introduced by Milic\'{e}vi\'{c} and Viehmann \cite{MV}. One may use proposition \ref{prop:newton} to extend Theorem 1.2 (b), (c) and Theorem 4.2 in \cite{MV} from the superregular elements to the elements in the lowest two-sided cells and from split groups to quasi-split groups. 
\end{remark}

\begin{proof}
Set $w=x t^\mu y$. We simply write $\tW(\bG)$ by $\tW$ and write $W_0(\bG)$ by $W_0$. By \cite[Proposition 2.1.5 \& Corollary 2.1.7]{LLHLM}, $w=w_1 w_2$ for some $w_1 \in \tW^{\BS_0}_{\max}$ and $w_2 \in {}^{\BS_0} \tW$ with $\ell(w)=\ell(w_1)+\ell(w_2)$. Since $w_1 \in \tW^{\BS_0}_{\max}$, we have $w_1=x' t^{\mu_1}$ for some $x' \in W_0$ and $\mu_1 \in X_*(T)_{\G}^+$. Since $w_2 \in {}^{\BS_0} \tW$, we have $w_2=t^{\mu_2} y'$ for some $y' \in W_0$ and $\mu_2 \in X_*(T)_{\G}^+$. Thus $w=x' t^{\mu_1+\mu_2} y'$. It is easy to see that $t^{\mu_1+\mu_2} y \in {}^{\BS_0} \tW$. So we have $x'=x, y'=y$ and $\mu=\mu_1+\mu_2$. 

By Theorem \ref{main} and the comments in \S\ref{sec:comm}, $\mu_1 + \s(\mu_2) - \wtt(y \i, \s(x))$ is dominant and $w_2 \ast \s(w_1)=z t^{\mu_1 + \s(\mu_2) - \wtt(y \i, \s(x))}$ for some $z \in W_0$. By Lemma \ref{shift} and \ref{sec:Newton} (3), $$[b_w]=[b_{w_1 \ast w_2}]=[b_{w_2 \ast \s(w_1)}]=[b_{z t^{\mu_1 + \s(\mu_2) - \wtt(y \i, \s(x))}}]=[\mu_1 + \s(\mu_2) - \wtt(y \i, \s(x))].$$ 

Moreover, $\mu_1 + \s(\mu_2) - \wtt(y \i, \s(x))+(\mu_2-\s(\mu_2))=\mu- \wtt(y \i, \s(x))$. Thus $[\mu_1 + \s(\mu_2) - \wtt(y \i, \s(x))]=[\mu- \wtt(y \i, \s(x))]$. The statement is proved.
\end{proof}

\subsection{Generic Newton points: reduction to quasi-split groups}\label{sec:3.2}
The study of generic Newton points for arbitrary reductive groups can be reduced to the quasi-split adjoint groups. 

Let $\bG_{\text{ad}}$ be the adjoint group of $\bG$. Let $\s_{\ad}$ be the Frobenius automorphism of $\bG_{\ad}$. Let $w \in \tW(\bG)$ and $w_{\ad}$ be its image in $\tW(\bG_{\ad})$. The maximal element $[b_w]^{\bG}$ of $B(\bG)_w$ is determined by its image $[b_{w_{\ad}}]^{\bG_{\ad}} \in B(\bG_{\ad})_{w_{\ad}}$ and its image under the Kottwitz map $\k: B(\bG) \to \pi(\bG)_\s$. 

We then study $[b_{w_{\ad}}]^{\bG_{\ad}}$. Note that there exists a length-zero element $\t$ in $\tW(\bG_{\ad})$ such that $\Ad(\t) \circ \s$ preserves the set of simple reflections $\BS_0 \subset \tilde \BS$. Set $\s_0=\Ad(\dot \t) \circ \s$. Let $\mathbf H$ be the associated inner form of $\bG_{\ad}$. Since $\s_0(\BS_0)=\BS_0$, $\mathbf H$ is a quasi-split inner form of $\bG_{\ad}$. We have $\bG_{\ad}(\breve F)=\mathbf H(\breve F)$. It is easy to see that the map $b \mapsto b \dot \t$ induces a natural bijection $B(\bG_{\ad}) \cong B(\mathbf H)$ and this bijection preserves the partial order $\le$ defined via the closure relations in $\bG_{\ad}(\breve F)=\mathbf H(\breve F)$. Under the map $b \mapsto b \dot \t$, $\breve I \dot w_{\ad} \breve I$ is mapped to $\breve I \dot w_{\ad} \dot \t \breve I$. Hence the natural bijection $B(\bG_{\ad}) \cong B(\mathbf H)$ restricts to a natural bijection $B(\bG_{\ad})_{w_{\ad}} \cong B(\mathbf H)_{w_{\ad} \t}$ for any $w_{\ad} \in \tW(\bG_{\ad})=\tW(\mathbf H)$. In particular, the maximal element $[b_{w_{\ad}}]^{\bG_{\ad}}$ of $B(\bG_{\ad})_{w_{\ad}}$ corresponds to the maximal element $[b_{w_{\ad} \t}]^{\mathbf H}$ of $B(\mathbf H)_{w_{\ad} \t}$. 

\subsection{Demazure product in the lowest two-sided cell}

If $w \in \tW_{\text{lowest}}$ and $s \in \tilde \BS$ such that $w \le w s$, then it follows by definition that $w s \in \tW_{\text{lowest}}$. Similarly, if $w \le s w$, then $s w \in \tW_{\text{lowest}}$. Thus $w \ast w' \in \tW_{\text{lowest}}$ if one of $w, w'$ belongs to $\tW_{\text{lowest}}$. This observation is pointed out to us by Felix Schremmer.

Now we give an explicit formula for the Demazure product of elements in $C_{\text{lowest}}$. 

\begin{proposition}\label{prop:ww}
Let $x, x', y, y' \in W_0$ and $\mu, \mu' \in X_*^+$ with $t^{\mu} y, t^{\mu'} y' \in {}^{\BS_0} \tW$. If $x t^\mu y, x' t^{\mu'} y' \in C_{\text{lowest}}$, then $$(x t^{\mu} y) \ast (x' t^{\mu'} y')=x t^{\mu+\mu'-\wtt(y \i, x')} y'.$$ 
\end{proposition}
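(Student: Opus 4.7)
The plan is to use the standard form factorizations to express $w$ and $w'$ as reduced products in the Demazure monoid, then invoke Theorem~\ref{main} on the middle portion of $w \ast w'$, and finally exploit the lowest cell hypothesis to identify the left and right $W_0$-components of the resulting element.

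First I would write $w = x \ast (t^\mu y)$ and $w' = x' \ast (t^{\mu'} y')$; these are reduced products thanks to $t^\mu y, t^{\mu'} y' \in {}^{\BS_0}\tW$, which give $\ell(w) = \ell(x) + \ell(t^\mu y)$ and likewise for $w'$. Associativity of $\ast$ then yields $w \ast w' = x \ast (t^\mu y) \ast x' \ast (t^{\mu'} y')$.

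The crucial preparatory step is to show $x' t^{\mu'} \in \tW^{\BS_0}_{\max}$, so that Theorem~\ref{main} can be applied to $(t^\mu y) \ast (x' t^{\mu'})$. This reduces to checking, for each simple root $\alpha$, that $x' t^{\mu'} s_\alpha \le x' t^{\mu'}$. When $\langle \mu', \alpha\rangle \ge 1$ this is immediate from $\ell(t^{\mu'} s_\alpha) = \ell(t^{\mu'}) - 1$. When $\langle \mu', \alpha\rangle = 0$, the standard form condition $y' \in {}^{I(\mu')} W_0$ forces $(y')^{-1}\alpha > 0$, and the cell hypothesis $\langle \mu',\alpha\rangle + \delta_{x'\alpha} - \delta_{(y')^{-1}\alpha} \ne 0$ then forces $x'(\alpha) < 0$; rewriting $x' t^{\mu'} s_\alpha = (x' s_\alpha) t^{\mu'}$ again yields the needed length drop. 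Applying Theorem~\ref{main} now gives $(t^\mu y) \ast (x' t^{\mu'}) = z t^\eta \in \tW^{\BS_0}_{\max}$ with $\eta = \mu + \mu' - \wtt(y^{-1}, x') \in X_*^+$ and some $z \in W_0$.

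To conclude, I would exploit that $w' \cdot (y')^{-1} = x' t^{\mu'}$ is a reduced product, so $w' \ast (y')^{-1} = x' t^{\mu'}$. Combined with associativity, $w \ast w' \ast (y')^{-1} = w \ast (x' t^{\mu'}) = x \ast (z t^\eta)$. The key claim is that $z = 1$, i.e.\ that $(t^\mu y) \ast (x' t^{\mu'}) = t^\eta$. Once this is granted, $x \ast t^\eta = x t^\eta$ as a reduced product (using $t^\eta \in {}^{\BS_0}\tW$), so $w \ast w' \ast (y')^{-1} = x t^\eta$. Since $w \ast w' \in C_{\text{lowest}}$ by the observation preceding the statement, combining this identity with the lower bound $w \ast w' \ge w \cdot w'$ and the constraint that multiplication on the right by $(y')^{-1}$ from an element in the shrunken chamber is a reduced product forces $w \ast w' = x t^\eta y'$.

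The main obstacle will be establishing $z = 1$. Using associativity, $s \ast [(t^\mu y) \ast (x' t^{\mu'})] = (s \ast (t^\mu y)) \ast (x' t^{\mu'})$ for $s \in \BS_0$, which reduces the claim to showing $(t^\mu y) \ast (x' t^{\mu'}) \in {}^{\BS_0}\tW$, i.e., that the Demazure product inherits the left-minimal property from the left factor. This demands a direct analysis of left descents of the product, using the cell hypotheses to rule out any $s \in \BS_0$ occurring as a left descent; without the cell assumption this can fail, so the argument must genuinely invoke $x' t^{\mu'} y' \in C_{\text{lowest}}$.
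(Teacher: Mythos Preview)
Your setup through the application of Theorem~\ref{main} is correct: you correctly verify $x' t^{\mu'} \in \tW^{\BS_0}_{\max}$ from the lowest-cell hypothesis on $w'$, and Theorem~\ref{main} then legitimately gives $(t^\mu y)\ast(x' t^{\mu'}) = z t^\eta$ with $\eta = \mu+\mu'-\wtt(y\i,x')$. However, your ``key claim'' $z=1$ is simply false. Take $w = w_0$ (so $x=w_0$, $\mu=0$, $y=1$; one checks $w_0 \in C_{\text{lowest}}$) and any $w' = x' t^{\mu'} y' \in C_{\text{lowest}}$ with $x' \neq 1$ (e.g.\ $\mu'$ regular). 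Then $(t^\mu y)\ast(x' t^{\mu'}) = 1 \ast (x' t^{\mu'}) = x' t^{\mu'}$, so $z = x' \neq 1$. Your proposed reduction via $s\ast(t^\mu y) = s\,t^\mu y$ does not help: left-minimality of $t^\mu y$ does not propagate to $(t^\mu y)\ast b$. Moreover, even granting the correct identity $(w\ast w')\ast (y')\i = (x\ast z)\,t^\eta$, you cannot recover $w\ast w'$ from it: the Demazure product is not right-cancellative, and e.g.\ both $x t^\eta$ and $x t^\eta y'$ satisfy $(\,\cdot\,)\ast(y')\i = x t^\eta$ whenever $x t^\eta \in \tW^{\BS_0}_{\max}$.

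The paper avoids both obstacles by a different factorization. Using the LLHLM decomposition for elements in $C_{\text{lowest}}$, it writes $w = w_1\ast w_2$ and $w' = w_1'\ast w_2'$ as \emph{reduced} products with $w_1 = x t^{\mu_1} \in \tW^{\BS_0}_{\max}$, $w_2 = t^{\mu_2} y \in {}^{\BS_0}\tW$, and similarly for $w'$; here $\mu=\mu_1+\mu_2$, $\mu'=\mu_1'+\mu_2'$. Theorem~\ref{main} is then applied to the \emph{inner} product $w_2\ast w_1' = z\,t^{\mu_1'+\mu_2-\wtt(y\i,x')}$. The unknown $z$ is harmlessly absorbed on the left because $w_1 \in \tW^{\BS_0}_{\max}$ forces $w_1\ast z = w_1$; and on the right, $w_2' = t^{\mu_2'} y' \in {}^{\BS_0}\tW$ multiplies on as an ordinary reduced product, so no cancellation is needed. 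Thus both lowest-cell hypotheses are used, symmetrically, to produce flanking factors in $\tW^{\BS_0}_{\max}$ and ${}^{\BS_0}\tW$ respectively---precisely what your decomposition $w = x\ast(t^\mu y)$ lacks on the left side.
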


\begin{proof}
Let $\tW=\tW(\mathfrak R)$. We first consider the case where $\mathfrak R$ is a root datum of adjoint type. 

As in the proof of Proposition \ref{prop:newton}, we have $x t^{\mu} y=w_1 \ast w_2$ and $x' t^{\mu'} y'=w'_1 \ast w'_2$, where $w_1=x t^{\mu_1} \in \tW^{\BS_0}_{\max}$, $w_2=t^{\mu_2} y \in {}^{\BS_0} \tW$, $w'_1=x' t^{\mu'_1} \in \tW^{\BS_0}_{\max}$, $w'_2=t^{\mu'_2} y' \in {}^{\BS_0} \tW$ for some dominant coweights $\mu_1, \mu_2, \mu'_1, \mu'_2$ with $\mu=\mu_1+\mu_2$ and $\mu'=\mu'_1+\mu'_2$. 

By Theorem \ref{main}, $w_2 \ast w'_1=z t^{\mu'_1+\mu'_2-\wtt(y \i, x')}$ for some $z \in W_0$ and $\mu'_1+\mu'_2-\wtt(y \i, x')$ is dominant. Hence \begin{align*}
    (x t^{\mu} y) \ast (x' t^{\mu'} y') &=(w_1 \ast w_2) \ast (w'_1 \ast w'_2)=w_1 \ast (w_2 \ast w'_1) \ast w'_2 \\ &=w_1 \ast (z t^{\mu'_1+\mu_2-\wtt(y \i, x')}) \ast w'_2 \\ &=(w_1 \ast z) \ast (t^{\mu'_1+\mu_2-\wtt(y \i, x')} \ast w'_2) \\ &=w_1 \ast (t^{\mu'+\mu_2-\wtt(y \i, x')} y') \\ &=x t^{\mu+\mu'-\wtt(y \i, x')} y'.
\end{align*} Notice that $w_1 \in \tW^{\BS_0}_{\max}$ and $t^{\mu'+\mu_2-\wtt(y \i, x')} y' \in {}^{\BS_0} \tW$. 

Now we consider the general case. Let $\mathfrak R_{\ad}$ be the root system of adjoint type associated to $\mathfrak R$ and $\tW_{\ad}=\tW(\mathfrak R_{\ad})$. Then we have a natural projection map $\pi_{\ad}: \tW \to \tW_{\ad}$. For any $\l \in X_*$, we denote by $\l_{\ad}$ its image in $(X_*)_{\ad}$. Then we have $$\pi_{\ad}((x t^{\mu} y) \ast (x' t^{\mu'} y'))=(x t^{\mu_{\ad}} y) \ast (x' t^{\mu'_{\ad}} y')=x t^{\mu_{\ad}+\mu'_{\ad}-\wtt(y \i, x')} y'.$$ On the other hand, $(x t^{\mu} y) \ast (x' t^{\mu'} y') \in W_a t^{\mu+\mu'} W_a$. So $(x t^{\mu} y) \ast (x' t^{\mu'} y')=x t^{\mu+\mu'-\wtt(y \i, x')} y'$. 
\end{proof}

\subsection{Demazure product on the coweight lattice}\label{sec:3.4} For any coweight $\l \in X_*$, we denote by $\bar \l$ the unique dominant coweight in the $W_0$-orbit of $\l$. We define the map $pr: \tW \to X_*$, which sends any element $w \in \tW$ to the unique dominant coweight $\l$ with $w \in W_0 t^\l W_0$. 

Now we define the Demazure product on the coweight lattice by $$\ast: X_* \times X_* \to X_*^+, \quad (\l_1, \l_2) \mapsto pr(t^{\l_1} \ast t^{\l_2}).$$

We have the following explicit formula. 

\begin{proposition}\label{prop:ast-l}
Let $\l_1, \l_2 \in X_*$. For $i=1, 2$, let $w_i$ be the unique element in $W_0^{I(\bar \l_i)}$ with $\l_i=w_i(\bar \l_i)$. Then $$\l_1 \ast \l_2=\bar \l_1+\bar \l_2-\wtt(w_1, w_2 w_{I(\bar \l_2)}).$$
\end{proposition}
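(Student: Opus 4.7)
The plan is to Demazure-multiply $t^{\l_1} \ast t^{\l_2}$ on the right by $u := w_2 w_{I(\bar\l_2)}$, so as to convert the right factor into an element of $\tW^{\BS_0}_{\max}$ and bring the whole expression into the scope of Theorem \ref{main}.

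Two reduced factorizations drive the argument. First, I would show that $t^{\l_1} = w_1 \cdot (t^{\bar\l_1} w_1^{-1})$ has additive length: since $w_1 \in W_0^{I(\bar\l_1)}$, no positive root of $W_{I(\bar\l_1)}$ is inverted by $w_1$, and a short calculation with the length formula of \S\ref{root} gives $\ell(t^{\bar\l_1} w_1^{-1}) = \ell(t^{\bar\l_1}) - \ell(w_1)$, which combines with $\ell(w_1)$ to recover $\ell(t^{\l_1}) = \ell(t^{\bar\l_1})$. Moreover $t^{\bar\l_1} w_1^{-1} \in {}^{\BS_0}\tW$, because $w_1^{-1} \in {}^{I(\bar\l_1)} W_0$. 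Second, using $t^{\l_2} w_2 = w_2 t^{\bar\l_2}$ and the commutation of $t^{\bar\l_2}$ with $w_{I(\bar\l_2)}$, the product $t^{\l_2} \cdot u$ is reduced and equals $(w_2 w_{I(\bar\l_2)}) t^{\bar\l_2}$; a direct right-descent check (splitting on whether $s \in \BS_0$ lies in $I(\bar\l_2)$ or not) confirms $(w_2 w_{I(\bar\l_2)}) t^{\bar\l_2} \in \tW^{\BS_0}_{\max}$.

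Associativity of $\ast$ together with these two identities yields
\begin{equation*}
(t^{\l_1} \ast t^{\l_2}) \ast u \;=\; w_1 \ast \bigl[(t^{\bar\l_1} w_1^{-1}) \ast \bigl((w_2 w_{I(\bar\l_2)}) t^{\bar\l_2}\bigr)\bigr].
\end{equation*}
Theorem \ref{main} applies to the bracketed factor (with $\mu_1 = \bar\l_1$, $\mu_2 = \bar\l_2$, $y = w_1^{-1}$, $x = w_2 w_{I(\bar\l_2)}$), placing it in $W_0\, t^{\eta}$ with $\eta := \bar\l_1 + \bar\l_2 - \wtt(w_1, w_2 w_{I(\bar\l_2)})$ dominant.

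To finish, I would note that $pr$ is invariant under Demazure multiplication by an element of $W_0$ on either side: for $s \in \BS_0$ and $x \in \tW$, the product $x \ast s$ is either $x$ or $xs$, both of which lie in $W_0\, t^{pr(x)}\, W_0$; iterating over reduced expressions for $w_1$ on the left and $u$ on the right of the displayed identity gives
\begin{equation*}
pr(t^{\l_1} \ast t^{\l_2}) \;=\; pr\bigl((t^{\l_1} \ast t^{\l_2}) \ast u\bigr) \;=\; \eta,
\end{equation*}
which is exactly the claimed value of $\l_1 \ast \l_2$. The main obstacle is the setup in the first step: verifying the additive-length decomposition of $t^{\l_1}$ and the membership $(w_2 w_{I(\bar\l_2)}) t^{\bar\l_2} \in \tW^{\BS_0}_{\max}$ requires careful bookkeeping with the length formula and the parabolic structure controlled by $I(\bar\l_i)$, after which the rest of the argument follows directly from Theorem \ref{main} and the $W_0$-invariance of $pr$.
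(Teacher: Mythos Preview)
Your proposal is correct and follows essentially the same route as the paper: both arguments decompose $t^{\l_1}=w_1\cdot(t^{\bar\l_1}w_1^{-1})$ with additive length, identify $w_2 w_{I(\bar\l_2)} t^{\bar\l_2}\in \tW^{\BS_0}_{\max}$, apply Theorem~\ref{main} to $(t^{\bar\l_1}w_1^{-1})\ast(w_2 w_{I(\bar\l_2)} t^{\bar\l_2})$, and then absorb the flanking $W_0$-factors via the $W_0$-bi-invariance of $pr$. The only cosmetic difference is that the paper handles the right side by writing $t^{\l_2}\in (w_2 w_{I(\bar\l_2)} t^{\bar\l_2})W_0$ and tracking the coset, whereas you achieve the same effect by Demazure-multiplying on the right by $u=w_2 w_{I(\bar\l_2)}$ (which is a length-additive product, so $t^{\l_2}\ast u=t^{\l_2}u$).
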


\begin{proof}
We have $t^{\l_1}=w_1 t^{\bar \l_1} w_1 \i$. Since $w_1 \in W_0^{I(\bar \l_1)}$, $t^{\bar \l_1} w_1 \i \in {}^{\BS_0} \tW$. We have $t^{\l_2}=w_2 t^{\bar \l_2} w_2 \i \in w_2 w_{I(\bar \l_2)} t^{\bar \l_2} W_0$. Since $w_2 \in W_0^{I(\bar \l_2)}$, $w_2 w_{I(\bar \l_2)} t^{\bar \l_2} \in \tW^{\BS_0}_{\max}$. By Theorem \ref{main}, $\bar \l_1+\bar \l_2-\wtt(w_1, w_2 w_{I(\bar \l_2)}) \in X_*^+$ and 
\begin{align*}
    t^{\l_1} \ast t^{\l_2} & \in (w_1 t^{\bar \l_1} w_1 \i) \ast (w_2 w_{I(\bar \l_2)} t^{\bar \l_2} W_0) \\ &=\bigl((w_1 t^{\bar \l_1} w_1 \i) \ast (w_2 w_{I(\bar \l_2)} t^{\bar \l_2}) \bigr) W_0 \\ &=w_1 \ast \bigl( (t^{\bar \l_1} w_1 \i) \ast (w_2 w_{I(\bar \l_2)} t^{\bar \l_2}) \bigr) W_0 \\ & \subseteq w_1 \ast (W_0 t^{\bar \l_1+\bar \l_2-\wtt(w_1, w_2 w_{I(\bar \l_2)})}) W_0 \\ &= W_0 t^{\bar \l_1+\bar \l_2-\wtt(w_1, w_2 w_{I(\bar \l_2)})}) W_0.
\end{align*}

The statement is proved. 
\end{proof}

\subsection{Lusztig-Vogan map}
Define $\iota: \tW \to \tW$ by $\iota(t^\l y)=w_0 t^{-\l} y w_0$ for $\l \in X_*$ and $y \in W_0$. It is easy to see that $\iota$ is a length-preserving involutive group automorphism on $\tW$. Let $\mathbf{I}_\iota = \{w \in \tW; \iota(w) = w\i\}$ be the set of twisted involutions of $\tW$. For $x \in \mathbf{I}_\iota$ we define $$ \pi_x: \tW \to \mathbf{I}_\iota, \quad w \mapsto w \ast x \ast \iota(w)\i.$$  

In \cite{LV12} and \cite{Lu12}, Lusztig and Vogan constructed a module $M$ of the Hecke algebra $H$ of $\tW$ over $\BZ[q]$, which has a linear basis indexed by $\mathbf{I}_\iota$. If we take $q = 0$, then $H$ becomes the $0$-Hecke algebra $H_0$ in \S\ref{product} and $M$ becomes a module $M_0 = \oplus_{x \in \mathbf{I}_\iota} \BZ a_x$ of the 0-Hecke algebra $H_0$. It is proved in \cite{LV} that the action of $H_0$ on $M_0$ has the following simple expression $$t_w a_x = (-1)^{\ell(x) + \ell(w) + \ell(\pi_x(w))} a_{\pi_x(w)}.$$

A particular interesting case is $x=1$ and $w \in {}^{\BS_0} \tW_{\max}$. Note that the map $\l \mapsto w_0 \ast t^\l$ gives a bijection from $X_*$ to ${}^{\BS_0} \tW_{\max}$. Thus the map $\pi_1$ induces $${}^{\BS_0} \pi: X_* \to X_*^+, \qquad \l \mapsto pr\bigl((w_0 \ast t^{\l}) \ast \iota(w_0 \ast t^{\l}) \i\bigr).$$

As a consequence of proposition \ref{prop:ast-l}, we obtain the following formula for ${}^{\BS_0} \pi$. This answers a question of Lusztig and Vogan in \cite[0.4]{LV}.

\begin{corollary}\label{cor:LV}
Let $\l \in X_*$ and $w$ be the unique element in $W_0^{I(\bar \l)}$ with $\l=w(\bar \l)$. Then ${}^{\BS_0} \pi(\l)=2 \bar \l-\wtt(w, w_0 w)$. 
\end{corollary}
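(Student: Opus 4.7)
The overall strategy is to rewrite $(w_0 \ast t^\l) \ast \iota(w_0 \ast t^\l)^{-1}$, up to $pr$, as the Demazure product $t^\l \ast t^{w_0 \l}$ of two pure translations, so that Proposition \ref{prop:ast-l} applies directly. The enabling observation is that $pr(g \ast x) = pr(x \ast g) = pr(x)$ for any $g \in W_0$ and $x \in \tW$: the product $g \ast x$ lies in $W_0 \cdot x \subseteq W_0 x W_0$, and similarly on the right (this follows immediately by induction on $\ell(g)$ using the rule $s \ast y \in \{y, sy\}$). This lets me freely absorb or strip any $W_0$-factor inside a Demazure product, as long as I only care about $pr$.

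First I would compute $u := w_0 \ast t^\l$ and $v := \iota(u)^{-1}$ explicitly. Writing $t^\l = w t^{\bar \l} w^{-1}$ in canonical form (valid since $w \in W_0^{I(\bar \l)}$ forces $t^{\bar \l} w^{-1} \in {}^{\BS_0}\tW$), associativity together with $w_0 \ast w = w_0$ and the length-additivity of left multiplication by $W_0$ on ${}^{\BS_0}\tW$ gives $u = w_0 t^{\bar \l} w^{-1}$. A direct computation from the definition $\iota(t^\mu y) = w_0 t^{-\mu} y w_0$ then yields $\iota(u) = t^{-\bar \l} w^{-1} w_0$, so $v = w_0 w \cdot t^{\bar \l}$. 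Using associativity and the invariance of $pr$ under left $W_0$-multiplication, $pr(u \ast v) = pr\bigl(w_0 \ast (t^\l \ast v)\bigr) = pr(t^\l \ast v)$.

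Next I would recognize $v$ itself as a Demazure product of pure translation with a finite Weyl group element. Since $t^{w_0 \l} = (w_0 w) t^{\bar \l} (w_0 w)^{-1}$, we have $t^{w_0 \l} \cdot (w_0 w) = (w_0 w) t^{\bar \l} = v$; the length check $\ell(t^{w_0 \l}) + \ell(w_0 w) = \ell(t^{\bar \l}) + \ell(w_0 w) = \ell(v)$ confirms the product is reduced, so $v = t^{w_0 \l} \ast (w_0 w)$. Stripping off the rightmost $W_0$-factor via the invariance of $pr$ gives $pr(t^\l \ast v) = pr(t^\l \ast t^{w_0 \l}) = \l \ast w_0 \l$ in the notation of \S\ref{sec:3.4}.

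Finally, I would apply Proposition \ref{prop:ast-l} to the pair $(\l, w_0 \l)$. Since $w_0 \l$ lies in the $W_0$-orbit of $\l$, both dominant representatives coincide with $\bar \l$, and $w_1 = w$. The minimum coset representative $w_2 \in W_0^{I(\bar \l)}$ with $(w_0 \l) = w_2(\bar \l)$ must lie in the coset $(w_0 w) W_{I(\bar \l)}$, and since the map $v \mapsto w_0 v$ reverses the length order on $W_0$, the minimum of that coset is $w_0 w w_{I(\bar \l)}$. Substituting gives $\l \ast w_0 \l = 2 \bar \l - \wtt\bigl(w, (w_0 w w_{I(\bar \l)}) \cdot w_{I(\bar \l)}\bigr) = 2 \bar \l - \wtt(w, w_0 w)$, which is the desired formula. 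The only step that is not purely formal is the identification $v = t^{w_0 \l} \ast (w_0 w)$, since it is what converts the mixed Demazure product into one of pure translations to which Proposition \ref{prop:ast-l} can be applied.
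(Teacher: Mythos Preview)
Your proof is correct and follows essentially the same approach as the paper: reduce ${}^{\BS_0}\pi(\l)$ to $\l \ast (w_0\l)$ and then apply Proposition~\ref{prop:ast-l} with $w_2 w_{I(\bar\l)} = w_0 w$. The only difference is in execution: the paper reaches $pr(t^{\l} \ast t^{w_0\l})$ in one line by using that $\iota$ is a length-preserving automorphism (hence a $\ast$-monoid automorphism) together with $(a \ast b)^{-1} = b^{-1} \ast a^{-1}$, yielding $(w_0 \ast t^{\l}) \ast \iota(w_0 \ast t^{\l})^{-1} = w_0 \ast (t^{\l} \ast \iota(t^{\l})^{-1}) \ast w_0$ and $\iota(t^{\l})^{-1} = t^{w_0\l}$, whereas you compute $u$ and $v$ explicitly and then refactor $v = t^{w_0\l} \ast (w_0 w)$ to strip off the extra $W_0$-factor.
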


\begin{proof}
Note that $(w_0 \ast t^{\l}) \ast \iota(w_0 \ast t^{\l}) \i=w_0 \ast (t^{\l} \ast \iota(t^{\l}) \i) \ast w_0 \in W_0 (t^{\l} \ast \iota(t^{\l}) \i) W_0$. Moreover, $\iota(t^{\l}) \i=t^{w_0(\l)}$. Thus ${}^{\BS_0} \pi(\l)=\l \ast (w_0(\l))$. Note that $\overline{w_0(\l)}=\bar \l$ and $w_0(\l)=w_0 w(\bar \l)$, where $w_0 w \in (W_0^{I(\bar \l)})_{\max}$. By Proposition \ref{prop:ast-l}, $\l \ast (w_0(\l))=2 \bar \l-\wtt(w, w_0 w)$. 
\end{proof}

\end{document}